\newtheorem{theorem}{Theorem}[section]
\newtheorem{lemma}[theorem]{Lemma}
\theoremstyle{definition}
\newtheorem{definition}[theorem]{Definition}
\theoremstyle{remark}
\newcommand{\C}{\mathbb C}
\newcommand{\G}{\mathbb G}
\newcommand{\Z}{\mathbb Z}
\newcommand{\R}{\mathbb R}
\newcommand{\SSS}{\mathbb S}
\newcommand{\trnm}[1]{\mbox{#1}}
\newcommand{\ncd}{\newcommand}
\ncd{\nn}{\nonumber}
\ncd{\ba}{\begin{array}}
\ncd{\ea}{\end{array}}
\ncd{\be}{\begin{equation}}
\ncd{\ee}{\end{equation}}
\ncd{\bea}{\begin{eqnarray}}
\ncd{\eea}{\end{eqnarray}}
\ncd{\ga}{\alpha}
\ncd{\gb}{\beta}
\ncd{\GG}{\Gamma}
\ncd{\gl}{\lambda}
\ncd{\GL}{\Lambda}
\ncd{\grg}{\gamma}
\ncd{\go}{\omega}
\ncd{\GO}{\Omega}
\ncd{\gr}{\rho}
\ncd{\nl}{\nabla}
\ncd{\gre}{\epsilon}
\ncd{\td}{\widetilde}
\ncd{\st}{\longrightarrow}
\ncd{\stk}{\rightarrow}
\ncd{\na}{\longmapsto}
\ncd{\fa}{\forall}
\ncd{\cc}{\circ}
\ncd{\za}{\subset}
\ncd{\wtw}{\Longleftrightarrow}
\ncd{\pcz}{\partial}
\ncd{\gz}{\zeta}
\ncd{\gt}{\theta}
\ncd{\gvt}{\vartheta}
\ncd{\gs}{\sigma}
\ncd{\GS}{\Sigma}
\ncd{\cl}{{\cal L}}
\ncd{\gd}{\delta}
\ncd{\GD}{\Delta}
\ncd{\ld}{\trnm{Ldiff}}
\ncd{\End}{\trnm{End}}
\ncd{\cj}{{\cal J}}
\ncd{\ce}{{\cal E}}
\ncd{\cf}{{\cal F}}
\ncd{\tr}{{\trnm Tr}}
\ncd{\bgw}{{\bigwedge}}
\ncd{\im}{\trnm{im}}
\ncd{\szesc}{{\times _6}}
\ncd{\siedem}{{\times _7}}
\ncd{\ppp}{{\times _p}}
\ncd{\la}{\langle}
\ncd{\ra}{\rangle}
\ncd{\bra }{\langle}
\ncd{\ket}{\rangle}
\ncd{\mnn}{ M_{n_1,\ldots ,n_N}}
\ncd{\pcuj}{\frac{\pcz}{\pcz u_1}}
\ncd{\pcucj}{\frac{\pcz}{\pcz u^*_1}}
\ncd{\pcud}{\frac{\pcz}{\pcz u_2}}
\ncd{\pcucd}{\frac{\pcz}{\pcz u^*_2}}
\ncd{\pcut}{\frac{\pcz}{\pcz u_3}}
\ncd{\pcuct}{\frac{\pcz}{\pcz u^*_3}}
\ncd{\pcum}{\frac{\pcz}{\pcz u_M}}
\ncd{\pcucm}{\frac{\pcz}{\pcz u^*_M}}
\ncd{\pcumj}{\frac{\pcz}{\pcz u_{M-1}}}
\ncd{\pcucmj}{\frac{\pcz}{\pcz u^*_{M-1}}}
\ncd{\pcujq}{\frac{\pcz ^2}{\pcz u_1 \pcz u^*_1}}
\ncd{\pcudq}{\frac{\pcz ^2}{\pcz u_2 \pcz u^*_2}}
\ncd{\jedn}{\hat{h}}
\ncd{\pczxk}{\frac{\pcz}{\pcz x_k}}
\ncd{\pcuk}{\frac{\pcz}{\pcz u_k}}
\ncd{\pcul}{\frac{\pcz}{\pcz u_l}}
\ncd{\pcuck}{\frac{\pcz}{\pcz u^*_k}}
\ncd{\pcucl}{\frac{\pcz}{\pcz u^*_l}}
\ncd{\pczk}{\frac{\pcz}{\pcz z_k}}
\ncd{\pczl}{\frac{\pcz}{\pcz z_l}}
\ncd{\pczck}{\frac{\pcz}{\pcz z^*_k}}
\ncd{\pczcl}{\frac{\pcz}{\pcz z^*_l}}
\ncd{\snk}{\sum_{k=1}^n}
\ncd{\sn}{{N \choose 2}}
\ncd{\gk}{\overline}
\ncd{\id}{\trnm{id}}
\begin{document}

\title[Mixed Hodge structures and Weierstrass $\sigma$-function]{Mixed Hodge structures and Weierstrass $\sigma$-function}

\author[G. Banaszak]{Grzegorz Banaszak*}
\address{Department of Mathematics and Computer Science, Adam Mickiewicz University,
Pozna\'{n} 61-614, Poland}
\email{banaszak@amu.edu.pl}

\author[J. Milewski]{Jan Milewski}
\address{Institute of Mathematics, Pozna\'n University of Technology,
ul. Piotrowo 3A, 60-965 Pozna\'n, Poland}
\email{jsmilew@wp.pl}

\subjclass[2010]{14D07}
\date{}
\keywords{Mixed Hodge structure, Deligne splitting, Weierstrass $\sigma$ function.}

\thanks{*Partially supported by the NCN (National Center of Science for Poland)
NN201 607440} 

\begin{abstract}
{Un $\sigma$-op{\' e}rateur sur la complexification $V_\C$ d'un espace
vectoriel r{\' e}el $V_{\R}$ est un op{\' e}rateur $A \in End_\C(V_\C)$ tel que
$\sigma(A) = 0$, o{\` u} $\sigma (z)$ est la fonction $\sigma$ de Weierstrass.
Dans cet article, nous introduisons la notion de $\sigma$-op{\' e}rateur
fortement pseudo-r{\' e}el et d{\' e}montrons qu'il y a une correspondance
biunivoque entre les structures de Hodge mixtes r{\' e}elles
et les $\sigma$-op{\' e}rateurs fortement pseudo-r{\' e}els.

\medskip

A $\sigma$-operator on a complexification $V_{\C}$
of an $\R$-vector space $V_{\R}$ is an operator $A \in \rm{End}_{\C} (V_{\C})$ such that $\sigma (A) = 0$
where $\sigma (z)$ denotes the Weierstrass $\sigma$-function.
In this paper we define the notion of the strongly pseudo-real $\sigma$-operator and prove that
there is one to one correspondence between real mixed Hodge structures and strongly pseudo-real $\sigma$-operators. 
}
\end{abstract}

\maketitle

\section{Introduction}
Let $\SSS := R_{\C/\R} \G_m$ and let $V_{\R}$ be a finite dimensional real vector space.  
By a real Hodge structure (HS) on $V_{\R}$ we understand a finite direct sum of real pure Hodge 
structures with given weights {cf. \cite{BM1}}. We can consider a Hodge structure as a real algebraic 
group representation  $\gr \, :\, \SSS \st {\rm GL}(V_{\R})$ (see eg. \cite{BM1}, 
\cite{G}, \cite{PS} for the definition of HS). 
Let
$
\mathcal{L}\rho
$
be the Lie algebra representation of $\gr$.
The following operator 
\[ S := S (\rho) :={\mathcal L}\gr (1+i) \in \End (V_{\R}),\]
introduced in \cite{BM1}, will be called the Hodge-Lie operator of the real ${\rm{HS}}$
given by $\rho.$

Let $\sigma (z) $ be the Weierstrass' sigma function for the lattice 
$\Lambda := \Z \, \omega_1 + \Z \, \omega_2$ 
with $\omega_1 = 1-i$ and $\omega_2=1+i:$
{\large \begin{equation}
\sigma (z) := z \prod_{(p,q)\neq (0,0)}
\left( 1-\frac{z}{\lambda_{p,q}} \right)
\exp \left[\frac{z}{\lambda_{p,q}}+
\frac{1}{2}\left(\frac{z}{\lambda_{p,q}}\right)^2\right]
\nonumber\end{equation}} 
where
\[ \lambda_{p,q}:= p\omega_1 + q\omega_2.\]

\begin{definition}
Let $V_K$ ($K=\R$ resp. $K= \C$) be a finite dimensional vector space over $K$. An endomorphism
$A \in {\rm{End}} (V_K)$ is called a (real resp. complex) $\gs$-operator if $\gs (A)=0$. 
\end{definition}
\medskip

Observe that if $V_{\C} = V_{\R} \otimes_{\R} \C$ then $A$ is a $\sigma$-operator on 
$V_{\C}$ if and only if $\gk A$ is a $\sigma$-operator.  
In \cite{BM1} we obtained the following result:
\begin{theorem}\label{main theorem BM1}
{\rm{(i)}} \,\, Hodge-Lie operator $S$ of any HS is a real $\gs$-operator.\newline
{\rm{(ii)}} \,\, There is one to one correspondence between HS and the 
real $\gs$-operators. This equivalence is given 
by assigning to a HS its Hodge-Lie operator:
\[\rho \,\, \longleftrightarrow \,\, S (\rho)\] 
\end{theorem}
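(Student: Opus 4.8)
The plan is to route everything through the standard dictionary between real Hodge structures and conjugation-symmetric bigradings of $V_\C$. First I would prove (i). Since a HS is by definition a finite direct sum of pure pieces, its complexification carries a decomposition $V_\C = \bigoplus_{p,q} V^{p,q}$ with $\gk{V^{p,q}} = V^{q,p}$, and $\rho$ acts on each $V^{p,q}$ through a single character of $\SSS$. Differentiating that character and evaluating at $1+i$ (with the normalization of \cite{BM1}), I expect $\mathcal{L}\rho(1+i)$ to act on $V^{p,q}$ as the scalar $p(1+i)+q(1-i) = q\omega_1 + p\omega_2 = \lambda_{q,p}$, which lies in $\Lambda$. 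As $S$ is then semisimple — scalar on each $V^{p,q}$ — and $\sigma$ vanishes on $\Lambda$, the holomorphic functional calculus gives $\sigma(S) = \bigoplus_{p,q}\sigma(\lambda_{q,p})\,\id = 0$, settling (i).

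The heart of (ii) is a converse description of real $\sigma$-operators, which I would isolate as a lemma: $A$ is a $\sigma$-operator if and only if $A$ is semisimple with spectrum contained in $\Lambda$. To prove it, decompose $V_\C$ into generalized eigenspaces and write $A = \mu I + N$ on the block for $\mu$, with $N$ nilpotent; then $\sigma(A) = \sigma(\mu)I + \sigma'(\mu)N + \tfrac12\sigma''(\mu)N^2 + \cdots$. Because $\sigma$ is entire with zero set exactly $\Lambda$ and all zeros simple, $\sigma(\mu)=0$ forces $\mu \in \Lambda$, and then $\sigma'(\mu)\neq 0$ makes the operator factor multiplying $N$ invertible, so $\sigma(A)=0$ forces $N=0$. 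Thus $\sigma$-operators are precisely the semisimple operators with eigenvalues in $\Lambda$; for a real $A$ the complexified spectrum and eigenspaces are moreover conjugation-stable.

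To build the inverse map I would start from a real $\sigma$-operator $A$, take its $\C$-eigenspace decomposition $V_\C = \bigoplus_{\lambda\in\Lambda} V_\lambda$, and set $V^{p,q} := V_{\lambda_{q,p}}$, using that $(p,q)\mapsto q\omega_1 + p\omega_2$ is a bijection $\Z^2 \st \Lambda$. Since $A$ is real one has $\gk{V_\lambda} = V_{\gk\lambda}$, and a direct check gives $\gk{\lambda_{q,p}} = \lambda_{p,q}$, so $\gk{V^{p,q}} = V^{q,p}$; grouping by weight $n = p+q = \mathrm{Re}(\lambda)$ yields a genuine real HS, hence a representation $\rho_A$ of $\SSS$. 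The two assignments are then mutually inverse: the eigenspaces of $S(\rho)$ are by construction the $V^{p,q}$, while $S(\rho_A)$ acts on $V^{p,q} = V_{\lambda_{q,p}}$ as $\lambda_{q,p}$, i.e. as $A$.

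The main obstacle I anticipate is the $\sigma$-operator lemma, specifically pinning down the two analytic inputs it rests on: that the zero locus of this particular Weierstrass $\sigma$ is exactly $\Lambda = \Z\omega_1 + \Z\omega_2$, and that every such zero is simple, so $\sigma'$ is nonvanishing on $\Lambda$. These are what simultaneously confine the spectrum to $\Lambda$ and force semisimplicity; were the zeros not simple, a $\sigma$-operator could fail to be semisimple and the correspondence would break down. A secondary, more clerical, difficulty is tracking the sign and normalization conventions in the $\SSS$-action so that the eigenvalue on $V^{p,q}$ comes out exactly as $\lambda_{q,p}$ and the conjugation relation reads $\gk{V^{p,q}} = V^{q,p}$.
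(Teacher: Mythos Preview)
This theorem is not proved in the present paper: it is quoted from the authors' earlier note \cite{BM1}, so there is no argument here against which to compare your proposal. That said, your outline is correct and is the natural route. The key lemma you isolate --- that a $\sigma$-operator is precisely a semisimple operator with spectrum in $\Lambda$, because $\sigma$ has exactly simple zeros at the lattice points --- is implicitly taken for granted in this paper as well: the eigenspace decomposition (\ref{decomposition op1}) and the identity $\sigma_{p,q}(A)=\sigma_{p,q}(\lambda_{p,q})P_{p,q}$ used in the proof of Lemma \ref{A weakly pseudo equiv iff A and overline A weakly equiv} presuppose exactly this semisimplicity. Your eigenvalue computation $S|_{V^{p,q}}=p(1+i)+q(1-i)=\lambda_{q,p}$ and the conjugation check $\gk{\lambda_{q,p}}=\lambda_{p,q}$ are also consistent with the conventions used here (cf.\ the remark $\lambda_{q,p}=\gk{\lambda}_{p,q}$ just after (\ref{I A and overline I A})).
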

Christopher Deninger asked us very interesting question 
whether our approach to real HS via Weierstrass $\sigma$ function can be extended to the mixed 
Hodge structures (MHS). This paper is an affirmative answer to the C. Deninger's question
(see Definition \ref{strongly pseudo real sigma operator} and Theorem 
\ref{strongly pseudo real  A determines a MHS} below). For the definition of MHS see eg.  \cite{CKS}, \cite{D1}, \cite{D2} cf. the Definition 
\ref{def of MHS} below. In all of this paper a MHS means a real MHS. 
\medskip

Let us introduce basic definitions and state the main results 
of this paper. The proofs of these results are given in section 3. 
For every
$\lambda \in \Lambda$ define:
\be \gs_{\lambda} (z):=\frac{\gs (z)}{z - \lambda}
\label{def. of sigma lambda}.\ee
We will write \, $\gs _{m,n}(z) := \gs_{\lambda_{m,n}}(z)$ for all 
$(m,n) \in \Z \times \Z.$

\begin{definition} Let $V_{\C} := V_{\R}\otimes \C$.
A $\gs$-operator $A$ on $V_{\C} $ is called a weakly pseudo-real if
\be \gs_{r,s}(A)(\gk A-A)\gs_{p,q}(A)=0
\label{weak pseudo real condition}\ee
for all $r+s\geq p+q$.
\label{weakly pseudo real sigma operator}
\end{definition}

\begin{theorem} Every weakly pseudo-real  $\sigma$-operator $A$ determines a MHS.
\label{pseudo real  A determines a MHS}\end{theorem}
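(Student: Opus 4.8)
The plan is to recover both a weight filtration and a Hodge filtration directly from the spectral data of $A$, and then to verify the two defining axioms of a real MHS: that the weight filtration is defined over $\R$, and that the Hodge filtration induces a pure Hodge structure of weight $k$ on each weight-graded piece. The starting point is that every $\gs$-operator is semisimple with spectrum contained in $\GL$: since $\gs(z)$ is entire with only simple zeros, located exactly at the $\gl_{m,n}$, the identity $\gs(A)=0$ forces each eigenvalue of $A$ into $\GL$ and forbids nontrivial Jordan blocks. Hence $V_{\C}=\bigoplus_{m,n}E_{m,n}$ with $E_{m,n}:=\ker(A-\gl_{m,n})$, and by the remark preceding Theorem \ref{main theorem BM1} the operator $\gk A$ is again a $\gs$-operator, so it too is semisimple with lattice spectrum. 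A direct computation with $\gs_{m,n}$ shows that $\gs_{m,n}(A)$ acts as the nonzero scalar $\gs'(\gl_{m,n})$ on $E_{m,n}$ and as $0$ on every other eigenspace; thus $\gs_{m,n}(A)$ is a nonzero multiple of the spectral projector $P_{m,n}$ onto $E_{m,n}$.

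With this dictionary the weak pseudo-reality condition (\ref{weak pseudo real condition}) becomes the purely linear-algebraic statement $P_{r,s}\,(\gk A-A)\,P_{p,q}=0$ whenever $r+s\ge p+q$; equivalently, the operator $B:=\gk A-A$ maps $E_{p,q}$ into $\bigoplus_{r+s<p+q}E_{r,s}$, i.e. $B$ strictly lowers the integer $m+n=\mathrm{Re}(\gl_{m,n})$. Guided by the pure case of Theorem \ref{main theorem BM1}, where the Hodge--Lie operator acts on $V^{p,q}$ by the eigenvalue $\gl_{q,p}$, I read off from an eigenvalue $\gl_{m,n}=(m+n)+(n-m)i$ the weight $m+n$ and the Hodge index $n=\tfrac12\bigl(\mathrm{Re}+\mathrm{Im}\bigr)(\gl_{m,n})$. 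Accordingly I would set
\[ W_{k}:=\bigoplus_{m+n\le k}E_{m,n},\qquad F^{p}:=\bigoplus_{n\ge p}E_{m,n}. \]

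To prove $W_{\bullet}$ is real, the key observation is the conjugation rule $\gk{E_{m,n}(A)}=E_{n,m}(\gk A)$ (the $\gl_{n,m}$-eigenspace of $\gk A$), which follows from $\gk{\gl_{m,n}}=\gl_{n,m}$; reindexing then gives $\gk{W_k}=\bigoplus_{m+n\le k}E_{m,n}(\gk A)$, so it suffices to show this last space equals $W_k=W_k(A)$. Since $B$ lowers weight, $\gk A=A+B$ preserves each $W_k(A)$ and induces the same map as $A$ on $\mathrm{Gr}^{W}_{k}$; as both $A$ and $\gk A$ are semisimple, comparing their spectra on $W_k(A)$ and on the quotient $V_{\C}/W_k(A)$ forces $\bigoplus_{m+n\le k}E_{m,n}(\gk A)=W_k(A)$, whence $\gk{W_k}=W_k$ and $W_k$ is the complexification of $W_k\cap V_{\R}$. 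For the remaining axiom I would pass to $\mathrm{Gr}^{W}_{k}V_{\C}\cong\bigoplus_{m+n=k}E_{m,n}$, let $H^{p,q}$ be the image of $E_{q,p}$ (so $p+q=k$), and check that $F$ induces $F^{p}\mathrm{Gr}^{W}_{k}=\bigoplus_{a\ge p}H^{a,k-a}$ together with $\gk{H^{p,q}}=H^{q,p}$; the latter again uses $\gk{E_{q,p}}=E_{p,q}(\gk A)$ and that $A$ and $\gk A$ agree modulo $W_{k-1}$. This exhibits a pure real Hodge structure of weight $k$ on each graded piece and completes the MHS.

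The step I expect to be the main obstacle is establishing that $W_{\bullet}$ is genuinely defined over $\R$: this is exactly where the hypothesis is used in full strength, and it requires carefully tracking how conjugation exchanges the eigenspaces of $A$ with those of $\gk A$ while the weak pseudo-reality condition keeps the two weight filtrations equal. Everything else is bookkeeping with the eigenspace decomposition, but this reality statement is the crux that turns the spectral data of a single $\gs$-operator into an honest real mixed Hodge structure.
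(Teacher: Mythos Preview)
Your proof is correct and is essentially the paper's argument carried out inline: the paper packages your projector identity $\gs_{p,q}(A)=\gs_{p,q}(\gl_{p,q})P_{p,q}$ and the resulting weight-lowering property of $\gk A-A$ into Lemma~\ref{A weakly pseudo equiv iff A and overline A weakly equiv} (``$A$ weakly pseudo-real $\Leftrightarrow$ $A$ and $\gk A$ weakly equivalent''), concludes $W_\bullet^A=W_\bullet^{\gk A}=\gk{W_\bullet^A}$, and then applies Theorem~\ref{main theorem BM1} to the real $\gs$-operator $[A]_n=[\gk A]_n$ induced on each ${\rm Gr}^W_n$. The only discrepancy is cosmetic: the paper takes $F^p_A=\bigoplus_{k\ge p,\,q}I^{k,q}_A$ using the \emph{first} lattice index rather than your second, which gives the conjugate Hodge filtration but of course still a MHS.
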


A natural question is whether there is one to one correspondence between weakly pseudo-real  $\sigma$-operators and real 
MHS on $V_{\R}.$ In general the answer is no. However, if we strengthen the condition 
(\ref{weak pseudo real condition}), then we can 
strengthen the Theorem \ref{pseudo real  A determines a MHS}.

\begin{definition} Let $V_{\C} := V_{\R}\otimes \C$.
A $\gs$-operator $A$ on $V_{\C} $ is called a strongly pseudo-real if
\be \gs_{r,s}(A)(\gk A-A)\gs_{p,q}(A)=0
\label{strongly pseudo real condition}\ee
for all $r\geq p$ or $s\geq q$.
\label{strongly pseudo real sigma operator}
\end{definition}

\begin{theorem} There is one to one correspondence between MHS and strongly pseudo-real  $\gs$-operators.
\label{strongly pseudo real  A determines a MHS}\end{theorem}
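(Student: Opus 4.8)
The plan is to translate the operator $A$ into a bigrading of $V_\C$ and to recognize that bigrading as Deligne's splitting of a MHS. First I would record that every $\gs$-operator is diagonalizable with spectrum in $\Lambda$: since $\gs$ has only simple zeros, located exactly at the points $\lambda_{p,q}$, a Jordan block of size $\ge 2$ at some $\lambda_{p,q}$ would force $\gs(A)\neq 0$ because $\gs'(\lambda_{p,q})\neq 0$. As $(p,q)\mapsto\lambda_{p,q}$ is a bijection $\Z\times\Z\to\Lambda$, the eigenspaces $E^{p,q}:=\ker(A-\lambda_{p,q})$ give a bigrading $V_\C=\bigoplus_{p,q}E^{p,q}$, labelled so that $E^{p,q}$ is the $(p,q)$-piece as in the pure case of Theorem \ref{main theorem BM1}. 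Because $\gs_{p,q}$ vanishes at every lattice point except $\lambda_{p,q}$, where it equals $\gs'(\lambda_{p,q})\neq 0$, the spectral decomposition of $A$ gives $\gs_{p,q}(A)=\gs'(\lambda_{p,q})\,\pi^{p,q}$, with $\pi^{p,q}$ the projection onto $E^{p,q}$ along the remaining summands. Cancelling the nonzero scalars, condition (\ref{strongly pseudo real condition}) becomes $\pi^{r,s}(\gk A-A)\pi^{p,q}=0$ whenever $r\ge p$ or $s\ge q$, i.e.
\[(\gk A-A)\,E^{p,q}\subseteq\bigoplus_{r<p,\ s<q}E^{r,s}\qquad\text{for all }(p,q).\]

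The heart of the argument is to read this as information about $\gk A=A+(\gk A-A)$. The displayed inclusion says that, for the bigrading, $\gk A$ is block lower triangular with diagonal blocks $\lambda_{p,q}\,\id$ on $E^{p,q}$ and off-diagonal part strictly decreasing both indices. As the diagonal eigenvalues $\lambda_{p,q}$ are pairwise distinct, $\gk A$ is diagonalizable and its $\lambda_{q,p}$-eigenspace lies lower-triangularly: it is contained in $E^{q,p}\oplus\bigoplus_{r<q,\,s<p}E^{r,s}$ and projects isomorphically onto $E^{q,p}$ modulo that tail. On the other hand, if $c$ denotes the conjugation of $V_\C$ fixing $V_\R$, then $\gk A=cAc$ gives $\ker(\gk A-\lambda_{q,p})=c\,\ker(A-\lambda_{p,q})=\gk{E^{p,q}}$, because $\gk{\lambda_{p,q}}=\lambda_{q,p}$. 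Combining the two descriptions yields
\[\gk{E^{p,q}}\equiv E^{q,p}\pmod{\textstyle\bigoplus_{r<q,\ s<p}E^{r,s}},\]
which is exactly Deligne's relation for the bigrading attached to a MHS (cf. \cite{CKS}, \cite{D1}, \cite{D2}). I expect the elementary but delicate linear algebra behind this step — the equivalence between strict double-lower-triangularity of $\gk A-A$ and the lower-triangular position of all eigenspaces of $\gk A$, together with keeping the two index ranges apart — to be the main obstacle.

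Granting the relation, I would set $W_k:=\bigoplus_{p+q\le k}E^{p,q}$ and $F^p:=\bigoplus_{a\ge p}E^{a,b}$. The relation gives $\gk{E^{p,q}}\subseteq W_{p+q}$, hence $\gk{W_k}=W_k$ and $W$ is defined over $\R$; moreover for $p+q=k$ the correction terms lie in $W_{k-2}$, so the images of the $E^{p,q}$ in $\mathrm{Gr}^W_k$ form a real Hodge decomposition of weight $k$, i.e. $F$ induces a pure HS on each $\mathrm{Gr}^W_k$. Thus $(W,F)$ is a MHS in the sense of Definition \ref{def of MHS}, and since $\{E^{p,q}\}$ satisfies the three characterizing conditions it is, by uniqueness of Deligne's splitting, that splitting. (The mere existence of the MHS is in fact already supplied by Theorem \ref{pseudo real  A determines a MHS}, since the strong condition implies the weak one: if $r<p$ and $s<q$ then $r+s<p+q$; the new content here is the identification of the splitting.)

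Finally I would assemble the bijection. Define $\Psi$ on a MHS by letting $A$ act on the Deligne piece $I^{p,q}$ through the scalar $\lambda_{p,q}$; then $\gs(A)=0$ since the eigenvalues are lattice points, and running the computation of the second paragraph in reverse turns Deligne's relation back into (\ref{strongly pseudo real condition}), so $A$ is strongly pseudo-real. Define $\Phi$ by sending a strongly pseudo-real $A$ to the MHS $(W,F)$ constructed above. Then $\Phi\circ\Psi=\id$, because reconstituting $W$ and $F$ from the Deligne splitting returns the original filtrations, and $\Psi\circ\Phi=\id$, because the third paragraph identifies $\{E^{p,q}\}$ with the Deligne splitting of $\Phi(A)$ while $A$ acts on $E^{p,q}$ by $\lambda_{p,q}$; the bijectivity of $(p,q)\mapsto\lambda_{p,q}$ ensures that no data is lost. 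This establishes the one-to-one correspondence.
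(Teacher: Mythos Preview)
Your proposal is correct and follows essentially the same route as the paper: both translate condition~(\ref{strongly pseudo real condition}) into $(\gk A-A)\,E^{p,q}\subseteq D^A_{p-1,q-1}$ via the projection identity $\gs_{p,q}(A)=\gs_{p,q}(\lambda_{p,q})\,\pi^{p,q}$, and then establish the Deligne relation $\gk{E^{p,q}}\equiv E^{q,p}\bmod D^A_{q-1,p-1}$ to identify $\{E^{p,q}\}$ with the canonical splitting. The only cosmetic difference is in this last step: the paper finds the correction vector explicitly by inverting $\lambda_{p,q}\,\id-\gk A$ on the invariant subspace $D^A_{p-1,q-1}$, whereas you phrase the same linear algebra as the block lower-triangular position of the $\gk A$-eigenspaces.
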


Observe that $A$ is a weakly pseudo-real (resp. strongly pseudo-real) $\gs$-operator if only if $\gk{A}$ is a weakly pseudo-real 
(resp. strongly pseudo-real) $\gs$-operator.

\section{Deligne splitting of a mixed Hodge structure}
\begin{definition} A real MHS on $V_{\R}$ consists of two filtrations, a finite increasing filtration on 
$V_{\R}$, the weight filtration ${W_{\bullet}}^{\R}$ and a finite decreasing filtration 
$F^{\bullet}$ on $V_{\C} := V_{\R}\otimes \C$, the Hodge filtration which induce a pure real HS of the 
weight $n$ on each graded piece
\be 
{{\rm{Gr}}}^W_n (V_{\R})= W_n^{\R}/W_{n-1}^{\R} .
\label{Gradation}\ee
\label{def of MHS}\end{definition}

\begin{theorem} {\rm{(Deligne splitting of MHS)}}. 
For any MHS there exists exactly one decomposition:
\be
V_{\C}=\bigoplus _{p,q} I^{p,q}
\label{decomposition1}\ee
such that
\be 
W_{n}=\bigoplus_{p+q \leq n} I^{p,q}, \quad F^p =\bigoplus _{{k\geq p}\atop {q \in \Z}} I^{k,q}, 
\label{decomposition2}\ee 
\be \label{wkdl} 
{I^{p,q}=\gk{I^{q,p}} \mod D_{p-1,q-1}}, 
\ee
where
\be 
W_n:=W_n ^{\R}\otimes_{\R} \C, \quad D_{r,s}:= \bigoplus _{{k\leq r}\atop {l \leq s}} I^{k,l}. 
\label{decomposition3}\ee 
This decomposition can be expressed via weight and Hodge filtrations in the following way: 

\begin{equation} I^{p,q}=V^p_{p+q}\cap (\gk{V^{q}_{p+q}} +\gk{U^{q-1}_{p+q-1}}), 
\end{equation}
where
\be 
V^p_n:=F^p\cap W_n , \quad  \quad U^m_n:=\sum_{j\geq 0} V^{m-j}_{n-j}.
\label{notation}\ee
\label{Deligne splitting of mixed Hodge structure}
\end{theorem}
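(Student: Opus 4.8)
The plan is to prove existence and uniqueness separately, taking the explicit formula displayed in the statement as the \emph{definition} of the candidate subspaces $I^{p,q}$ and arguing by induction on the length of the weight filtration $W_\bullet$. Throughout I will use that $W_\bullet$ is defined over $\R$, so complex conjugation preserves each $W_n = W_n^{\R}\otimes_{\R}\C$; in particular $\gk{V^q_n}=\gk{F}^q\cap W_n$, and the correction term $\gk{U^{q-1}_{p+q-1}}$ lies inside $W_{p+q-1}$.

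For existence I would first analyze the image of $I^{p,q}$ under the projection $W_{p+q}\twoheadrightarrow {\rm{Gr}}^W_{p+q}$. Since $V^p_{p+q}$ maps onto $\bigoplus_{a\ge p}H^{a,\,p+q-a}$, while $\gk{V^q_{p+q}}$ maps onto $\bigoplus_{a\le p}H^{a,\,p+q-a}$ and $\gk{U^{q-1}_{p+q-1}}$ maps to $0$, the image of $I^{p,q}$ is contained in the Hodge component $H^{p,q}$ of the pure weight-$(p+q)$ structure on the graded piece; the substance of the step is to show the image equals $H^{p,q}$, where the correction term is what supplies surjectivity. Granting this, the purity ${\rm{Gr}}^W_n=\bigoplus_{p+q=n}H^{p,q}$ combined with the inductive hypothesis applied to the sub-MHS $W_{n-1}$ yields both the directness in (\ref{decomposition1}) and the weight compatibility $W_n=\bigoplus_{p+q\le n}I^{p,q}$ of (\ref{decomposition2}). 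The inclusion $\bigoplus_{k\ge p,\,l}I^{k,l}\za F^p$ is immediate from $I^{k,l}\za V^k_{k+l}\za F^k\za F^p$ for $k\ge p$; for the reverse inclusion I would take $v\in F^p$, decompose it via (\ref{decomposition1}), and rule out components of first index below $p$ by a graded induction on the weight.

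The conjugation relation (\ref{wkdl}) is checked by direct computation: conjugation interchanges $F^\bullet\leftrightarrow\gk{F}^\bullet$ and fixes $W_\bullet$, giving $\gk{I^{q,p}}=\gk{V^q_{p+q}}\cap(V^p_{p+q}+U^{p-1}_{p+q-1})$, and one verifies this coincides with $I^{p,q}$ after reduction modulo $D_{p-1,q-1}$. The asymmetric lower-weight term $\gk{U^{q-1}_{p+q-1}}$ is built into the formula precisely to absorb the discrepancy between $I^{p,q}$ and $\gk{I^{q,p}}$ that would otherwise break this symmetry. For uniqueness I would show that any decomposition obeying (\ref{decomposition2}) and (\ref{wkdl}) is forced to agree with the one given by the formula: (\ref{decomposition2}) confines $I^{p,q}$ to $V^p_{p+q}=F^p\cap W_{p+q}$ and simultaneously to the span of the conjugate pieces of complementary indices, and (\ref{wkdl}) together with directness then determines each summand unambiguously, recursively in the weight.

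The main obstacle I expect is the interplay, special to the mixed setting, among $F$, $\gk{F}$ and $W$ that makes the naive guess $F^p\cap\gk{F}^q$ fail to split $V_\C$. The combinatorial heart of the argument is proving that adjoining the correction terms $\gk{U^{q-1}_{p+q-1}}$ restores directness while keeping both filtrations compatible and the mod-$D_{p-1,q-1}$ conjugation symmetry intact; it is the need to satisfy these three requirements simultaneously, rather than any single filtration identity, that makes the verification delicate.
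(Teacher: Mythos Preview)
The paper does not give its own proof of this theorem: immediately after the statement it simply writes ``For the proof of the Theorem~\ref{Deligne splitting of mixed Hodge structure} see \cite{D2} (cf.\ \cite{CKS} pp.~471--472).'' So there is nothing in the paper to compare against beyond the cited sources.

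Your outline is the standard argument one finds in those references: take the explicit formula as the definition of $I^{p,q}$, show that the natural map $I^{p,q}\to{\rm Gr}^W_{p+q}$ lands in and surjects onto the Hodge component $H^{p,q}$, and then induct on the weight to obtain the splitting and the filtration identities; uniqueness follows by the same weight induction. The one place where your sketch is thin is exactly the step you flag yourself, namely surjectivity of $I^{p,q}\to H^{p,q}$. In the references this uses that the induced filtrations $F$ and $\overline{F}$ on ${\rm Gr}^W_n$ are $n$-opposed (the MHS axiom), together with a Zassenhaus-type lemma to show that a lift of a class in $H^{p,q}$ can be corrected by an element of $\overline{U^{q-1}_{p+q-1}}$ so as to lie in $V^p_{p+q}\cap(\overline{V^q_{p+q}}+\overline{U^{q-1}_{p+q-1}})$. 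Your phrase ``the correction term is what supplies surjectivity'' is correct in spirit but would need this argument spelled out to be a proof.
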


For the proof of the Theorem \ref{Deligne splitting of mixed Hodge structure}
see \cite{D2} (cf. \cite{CKS} pp. 471-472).
Observe that $\gk {V^{q}_{n}} = \gk{F^q} \cap W_n$ because $\gk {W_n}=W_n$. Moreover 

\begin{equation}
D_{p,q} =  \overline{D_{q,p}}
\label{Dpq conj. prop.}\end{equation}
\begin{equation}
W_n = \sum_{p+q = n} D_{p,q}
\label{decomposition4}\end{equation}

\section{Mixed Hodge Structures via pseudo-real $\sigma$-operators} 

Let $A$ be a $\sigma$-operator on $V_C=V_R \otimes \C$. We get the following decomposition into eigenspaces:
\begin{equation}
V_{\C}:=\bigoplus_{p,q}I^{p,q}_A, \quad\quad I^{p,q}_A:=\{ x\in V: Ax =\gl_{p,q}x\}. 
\label{decomposition op1}\end{equation}
Certainly
\be
I^{p,q}_{\gk A} = \gk{I^{q,p}_{A}}
\label{I A and overline I A}
\ee
because $\lambda_{q,p} = \gk{\lambda}_{p,q}$.
Define the weight and Hodge filtrations and bifiltration of this $\sigma$-operator in the following way: 
\begin{equation} \label{rfd} W_n^A := \bigoplus_{p+q\leq n}I^{p,q}_A,\quad   
F^p_A := \bigoplus_{{k\geq p}\atop {q\in \Z}}I^{k,q}_A , 
\quad D_{p,q}^A :=\bigoplus_{{k\leq p}\atop {l\leq q}}I^{k,l}_A.
\end{equation}

\begin{definition}
Two $\sigma$-operators $A_1$ and $A_2$ are called weakly equivalent if they determine the same 
weight filtration and they induce the same 
homomorphism on the grading of the weight filtration (hence their difference is a homomorphism  
of weight filtration of degree $-1$).
\label{weakly equivalent}\end{definition}

\begin{lemma} Let $V_{\C}$ be a complexification of the real vector space $V_{\R}$.
A $\gs$-operator $A \in {\rm{End}}_{\C} (V_{\C})$ is weakly pseudo-real if and only if operators $A$ and $\gk{A}$ 
are weakly equivalent.
\label{A weakly pseudo equiv iff A and overline A weakly equiv}
\end{lemma}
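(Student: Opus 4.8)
The plan is to reduce the algebraic condition (\ref{weak pseudo real condition}) to a purely filtration-theoretic statement about the operator $\gk A-A$, and then to compare the weight filtrations of $A$ and $\gk A$. The starting point is the observation that each $\gs_{p,q}(A)$ is, up to a nonzero scalar, the spectral projection onto $I^{p,q}_A$. Since $A$ is a $\gs$-operator and the zeros of $\gs$ are simple and located exactly at the lattice points $\lambda_{p,q}$, the operator $A$ is diagonalizable with $V_\C=\bigoplus_{p,q}I^{p,q}_A$. The function $\gs_{p,q}(z)=\gs(z)/(z-\lambda_{p,q})$ vanishes at every lattice point except $\lambda_{p,q}$, where it equals $\gs'(\lambda_{p,q})\neq 0$; hence $\gs_{p,q}(A)=\gs'(\lambda_{p,q})\,\pi_{p,q}$, where $\pi_{p,q}$ denotes the projection onto $I^{p,q}_A$ along the remaining eigenspaces.

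Using this, I would rewrite (\ref{weak pseudo real condition}). Dividing out the nonzero scalars $\gs'(\lambda_{r,s})$ and $\gs'(\lambda_{p,q})$, the condition becomes $\pi_{r,s}(\gk A-A)\pi_{p,q}=0$ for all $r+s\ge p+q$, i.e. $(\gk A-A)(I^{p,q}_A)\subseteq\bigoplus_{k+l<p+q}I^{k,l}_A=W^A_{p+q-1}$. Summing over eigenspaces, this is equivalent to $(\gk A-A)(W^A_n)\subseteq W^A_{n-1}$ for every $n$; that is, $\gk A-A$ is an endomorphism of $W^A_\bullet$ of degree $-1$. This reformulation is the technical heart of the lemma, and both implications will follow from it.

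For the ``if'' direction, weak equivalence of $A$ and $\gk A$ gives a common weight filtration $W_\bullet=W^A_\bullet=W^{\gk A}_\bullet$ on which both operators induce the same graded map, so $\gk A-A$ lowers $W_\bullet$ by one and (\ref{weak pseudo real condition}) holds by the computation above. For the ``only if'' direction, I would start from $(\gk A-A)(W^A_n)\subseteq W^A_{n-1}$. Then $\gk A=A+(\gk A-A)$ preserves $W^A_\bullet$ and induces the same map as $A$ on each graded piece $\mathrm{Gr}^{W^A}_n=W^A_n/W^A_{n-1}$. What remains, and what I expect to be the main obstacle, is to prove that $W^{\gk A}_\bullet$ coincides with $W^A_\bullet$, so that the two operators are genuinely weakly equivalent rather than merely comparable through $W^A_\bullet$.

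To settle this last point I would exploit the arithmetic identity $\mathrm{Re}\,\lambda_{p,q}=p+q$, which says that the weight of an eigenspace is exactly the real part of its eigenvalue, so that $W^A_n=\bigoplus_{\mathrm{Re}\,\lambda_{p,q}\le n}I^{p,q}_A$. Since $\gk A$ preserves $W^A_\bullet$ and acts on the graded pieces as $A$ does, the characteristic polynomials of $A$ and $\gk A$ agree (each is the product of the characteristic polynomials on the graded pieces), so $A$ and $\gk A$ have the same eigenvalues with multiplicities; counting those of real part $\le n$ yields $\dim W^{\gk A}_n=\dim W^A_n$. On the other hand every eigenvalue of $\gk A$ occurring in $W^A_n$ arises on some $\mathrm{Gr}^{W^A}_m$ with $m\le n$ and hence has real part $\le n$; decomposing an arbitrary $x\in W^A_n$ into $\gk A$-eigencomponents (which remain in the $\gk A$-invariant subspace $W^A_n$ by semisimplicity of $\gk A$) shows $x\in W^{\gk A}_n$, i.e. $W^A_n\subseteq W^{\gk A}_n$. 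Equality of dimensions then forces $W^A_\bullet=W^{\gk A}_\bullet$, and combined with the identical action on the grading this is precisely weak equivalence of $A$ and $\gk A$, completing the proof.
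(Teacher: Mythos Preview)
Your proof is correct and proceeds along the same line as the paper's: the key identity $\gs_{p,q}(A)=\gs_{p,q}(\gl_{p,q})\,P_{p,q}$ (your $\gs'(\gl_{p,q})\,\pi_{p,q}$) is exactly what the paper invokes, and the paper then declares the lemma a consequence of it without further detail. You have supplied the details the paper omits, in particular the verification that $W^A_\bullet=W^{\gk A}_\bullet$ in the ``only if'' direction; this step can also be shortened by using the paper's remark that $A$ is weakly pseudo-real iff $\gk A$ is (so the inclusion $W^{\gk A}_n\subseteq W^A_n$ follows by symmetry), or simply by noting $\dim W^{\gk A}_n=\sum_{p+q\le n}\dim\gk{I^{q,p}_A}=\dim W^A_n$ from $I^{p,q}_{\gk A}=\gk{I^{q,p}_A}$.
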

\begin{proof}
Consider the decomposition (\ref{decomposition op1}). The lemma
is a consequence of the following equality: 
\[ \gs_{p,q}(A) =\gs_{p,q}(\gl _{p,q}) P_{p,q}, \]
where $P_{p,q}$ denotes the projection operator
\[P_{p,q} : \bigoplus_{m,n} I^{m,n} _A \rightarrow I^{p,q} _A \] 
onto the direct summand $I^{p,q}_A.$  
\end{proof}

\noindent 
{\it Proof of the Theorem} \ref{pseudo real  A determines a MHS}. \newline 
By Lemma \ref{A weakly pseudo equiv iff A and overline A weakly equiv}
the $\sigma$-operators  $A$ and $\gk{A}$ are weakly equivalent, hence they determine 
the same weight filtration  $W_{\bullet} = \overline{W_{\bullet}} \;$, where $W_{\bullet} := 
W_{\bullet}^A$ and $\gk{W_{\bullet}} = W_{\bullet}^{\gk{A}}.$
Hence the weight filtration $W_{\bullet}$ is a complexification of certain increasing filtration $W_{\bullet}^{\R}$ on $V_{\R}$.
Induced quotient operators $[A]_n$, $[\overline{A}]_n$ on $W_n/W_{n-1}$ are equal, hence are real $\gs$-operators
(a real $\sigma$-operator on $V_{\C}$ is a $\gs$-operator which is a complexification of an operator on $V_{\R}$). 
All eigenvalues of the operator $[A]_n$ have real part equal $n$, hence this operator gives a pure Hodge
structure of weight $n$. Hodge filtration of the $\gs$-operator $[A]_n$ on $W_n/W_{n-1}$ is induced by the 
Hodge filtration of $A$. Hence the weight and Hodge filtrations of the $\sigma$-operator $A$ induce
a mixed Hodge structure. 
\qed

The double increasing, finite filtration $D^{A}_{\bullet, \, \bullet}$ given in (\ref{rfd}) leads to the 
following definition:

\begin{definition}
Two $\gs$-operators $A_1$ and $A_2$ are called strongly equivalent if they determine the same double filtrations
$D^{A_1}_{\bullet, \,\bullet} = D^{A_2}_{\bullet, \, \bullet}$ and their difference is a homomorphism of this bifiltration of bidegree
$(-1,-1)$. 
\end{definition}

\begin{lemma}
 A $\gs$-operator $A$ is strongly pseudo-real if and only if operators $A$ and $\gk{A}$ 
are strongly equivalent.
\label{strongly pseudo real iff MHS}\end{lemma}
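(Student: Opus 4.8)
The plan is to translate both the analytic ``strongly pseudo-real'' condition and the two linear-algebra conditions defining strong equivalence into statements about the single operator $B:=\gk{A}-A$ and the eigenspace decomposition (\ref{decomposition op1}), and then to match them.

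First I would record, exactly as in the proof of Lemma \ref{A weakly pseudo equiv iff A and overline A weakly equiv}, the identity $\gs_{p,q}(A)=\gs_{p,q}(\gl_{p,q})\,P_{p,q}$, where $P_{p,q}$ is the projection of $\bigoplus_{m,n}I^{m,n}_A$ onto $I^{p,q}_A$; here $\gs_{p,q}(\gl_{p,q})=\gs'(\gl_{p,q})\neq 0$ since $\gl_{p,q}$ is a simple zero of $\gs$. Cancelling these nonzero scalars, condition (\ref{strongly pseudo real condition}) is equivalent to $P_{r,s}\,B\,P_{p,q}=0$ whenever $r\geq p$ or $s\geq q$, i.e. to $B(I^{p,q}_A)\subseteq D^A_{p-1,q-1}$ for every $(p,q)$. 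Summing over the summands of $D^A_{p,q}$, this is in turn equivalent to $B(D^A_{p,q})\subseteq D^A_{p-1,q-1}$ for all $(p,q)$ (the reverse implication being immediate from $I^{p,q}_A\subseteq D^A_{p,q}$). The last statement is precisely the assertion that, once $D^A=D^{\gk A}$ is known, $B$ is a homomorphism of the bifiltration $D^A_{\bullet,\bullet}$ of bidegree $(-1,-1)$, which is the second requirement in the definition of strong equivalence. This already yields the implication ``strongly equivalent $\Rightarrow$ strongly pseudo-real,'' since that hypothesis supplies $B(D^A_{p,q})\subseteq D^A_{p-1,q-1}$ directly.

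The remaining, and main, point is to show that a strongly pseudo-real $A$ satisfies $D^A=D^{\gk A}$; this is the step I expect to be the real obstacle. From $B(I^{p,q}_A)\subseteq D^A_{p-1,q-1}$ and $A(D^A_{p,q})\subseteq D^A_{p,q}$ one gets that each $D^A_{p,q}$ is $\gk A$-invariant. Now I would use that $\gk A$, being again a $\gs$-operator, is diagonalizable with the same pairwise distinct eigenvalues $\gl_{p,q}=(p+q)+(q-p)i$. Ordering the indices by a total order refining $(k,l)\le(p,q)\Leftrightarrow k\le p,\ l\le q$, the operator $\gk A=A+B$ is block lower triangular with diagonal blocks $\gl_{k,l}\,\id$, so its characteristic polynomial on $D^A_{p,q}$ is $\prod_{k\le p,\,l\le q}(t-\gl_{k,l})^{\dim I^{k,l}_A}$, and on all of $V_{\C}$ it is the analogous product over every index. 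Since the restriction of the diagonalizable $\gk A$ to the invariant subspace $D^A_{p,q}$ is again diagonalizable, comparing the multiplicity of $\gl_{p,q}$ in these two polynomials gives $\dim\big(I^{p,q}_{\gk A}\cap D^A_{p,q}\big)=\dim I^{p,q}_A=\dim I^{p,q}_{\gk A}$, whence $I^{p,q}_{\gk A}\subseteq D^A_{p,q}$ and therefore $D^{\gk A}_{p,q}\subseteq D^A_{p,q}$.

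Finally I would invoke the symmetry noted after Definition \ref{strongly pseudo real sigma operator}: $A$ is strongly pseudo-real if and only if $\gk A$ is. Applying the previous paragraph to $\gk A$ (whose conjugate is $A$) yields the reverse inclusion $D^A_{p,q}\subseteq D^{\gk A}_{p,q}$, hence $D^A=D^{\gk A}$. Combined with the bidegree-$(-1,-1)$ statement from the first step, this shows that $A$ and $\gk A$ are strongly equivalent, completing the implication ``strongly pseudo-real $\Rightarrow$ strongly equivalent'' and thus the lemma.
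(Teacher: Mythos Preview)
Your proof is correct and rests on the same identity $\gs_{p,q}(A)=\gs_{p,q}(\gl_{p,q})P_{p,q}$ that the paper's proof invokes; the paper itself says only that the argument is ``very similar'' to that of Lemma~\ref{A weakly pseudo equiv iff A and overline A weakly equiv}. Your write-up is in fact more complete than the paper's: you explicitly establish $D^{A}_{\bullet,\bullet}=D^{\gk A}_{\bullet,\bullet}$ via a block-triangularity/characteristic-polynomial argument together with the symmetry $A\leftrightarrow\gk A$, a nontrivial step the paper leaves entirely to the reader.
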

\begin{proof}
The proof is very similar to the proof of Lemma \ref{A weakly pseudo equiv iff A and overline A weakly equiv}
concerning the  weakly pseudo-real $\sigma$-operator.
\end{proof}

\noindent
{\it Proof of the Theorem} \ref{strongly pseudo real  A determines a MHS}. \newline 
Every mixed Hodge structure ($W_\bullet$, $F^\bullet$) determines uniquely the canonical Deligne's decomposition
(\ref{decomposition1}) - (\ref{decomposition3}). Hence $\gs$-operator $A$
\[
 A:=\bigoplus_{p,q} \lambda_{p,q} \, {\rm{id}}_{I^{p,q}} 
\]
is also uniquely determined by this mixed Hodge structure and is strongly pseudo-real. 
Indeed
\[
 {\gk{A}} = \bigoplus_{p,q} \gl_{p,q}\, {\rm{id}}_{\gk{I^{q,p}}}, 
\]
because $\gk{\gl}_{q,p}= \gl_{p,q}$. 
Recall that $D_{q,p}=\gk{D}_{p,q}$. 
Let $x \in D_{p,q}$. For $r \leq p, \, s \leq q,\;  k < r, \;l <s$ there exist vectors 
$\widetilde{x}_{r,s} \in \gk{{I}^{s,r}}$, 
$x_{r,s} \in  I^{r,s}$, $u_{r,s;k,l}\in I^{k,l}$ such that

\[ 
x=\sum_{{r\leq p} \atop {s\leq q}} \widetilde{x} _{r,s}\; , \quad 
\widetilde{x} _{r,s}=x_{r,s}+\sum_{{k<r} \atop {l<s}} u_{r,s;k,l}.
\]
Hence
\[ 
(\gk{A} -A)x=\sum_{{r,s}, {k,l}} (\gl_{r,s}-\gl_{k,l})u_{r,s;k,l}\in D_{p-1,q-1},
\]
where the sum $\sum_{{r,s}, {k,l}}$ is given for indices $r,s,k,l$ in the range 
$r \leq p, \, s \leq q,\;  k < r, \;l <s$. By Lemma \ref{strongly pseudo real iff MHS}
the $\gs$-operator $A$ is strongly pseudo-real.

\medskip

Conversely every $\gs$-operator $A$ determines unique decomposition of $V_{\C}$ 
into eigen-subspaces (see (\ref{decomposition op1}) and (\ref{rfd})).
When $\gs$-operator $A$ is strongly pseudo-real then, by Lemma \ref{strongly pseudo real iff MHS}, 
equalities (\ref{decomposition op1}) and (\ref{rfd}) 
give the Deligne canonical decomposition of the mixed Hodge structure determined by $A$. 
Indeed in this case for $x \in I^{p,q}_A$ we have $\overline{A} x- A x =: u \in D_{p-1,q-1}^A$.
Hence
\be
\overline{A} x=\lambda_{p,q} x+u.
\label{u without prime}
\ee
It remains to check that:  
\be
x+u' \in \overline{I_{A}^{q,p}}, 
\label{u prime}\ee 
for some $u' \in D_{p-1,q-1}^A.$ Since $A$ is strongly pseudo-real we get 
$D_{\bullet,\bullet}^A = D_{\bullet,\bullet}^{\overline A}.$ Observe that
the operator $\lambda_{p,q} \id  -\overline{A}$ with domain and target restricted to the invariant 
subspace $D_{p-1,q-1}^A$ is an automorphism because
$\lambda_{p,q}$ does not belong to the spectrum of this restriction of $A$. Hence
there exists unique $u' \in D_{p-1,q-1}^A$ such that $\lambda _{p,q} u'- \overline{A} u' = u$.
By (\ref{u without prime}) this vector $u'$ satisfies the equality (\ref{u prime}) because
of (\ref{I A and overline I A}).   
Observe that the following correspondences described in this proof: 
\[
\rm{MHS} \quad \na \quad \rm{strongly \; pseudo-real \; \;}\gs-{\rm{operator}} 
\]  
\[
\rm{strongly \; pseudo-real \; \;}\gs-{\rm{operator}} \quad  \na \quad \rm{MHS} 
\]
are inverse one of the other.
\qed

\end{document}